\documentclass[11pt]{amsart}

\author[C.~Sanna]{Carlo Sanna$^\dagger$}
\thanks{$\dagger\,$C.~Sanna is a member of GNSAGA of INdAM and of CrypTO, the group of Cryptography and Number~Theory of Politecnico di Torino}
\address{\parbox{\linewidth}{
Department of Mathematical Sciences, Politecnico di Torino\\
Corso Duca degli Abruzzi 24, 10129 Torino, Italy\\[-8pt]}}
\email{carlo.sanna@polito.it}

\keywords{contiguous superregular matrices; finite fields; MDS codes; polynomial inequalities; symbolic computation}
\subjclass[2010]{Primary: 15B33, 05A05 Secondary: 05-04, 68W30}

\title{Counting contiguous superregular $4 \times 4$ matrices}

\usepackage{amsmath}
\usepackage{amssymb}
\usepackage{amsthm}
\usepackage{bm}
\usepackage[left=1.15in, right=1.15in, top=.72in, bottom=.72in]{geometry}
\usepackage[colorlinks=true]{hyperref}
\usepackage{comment}
\usepackage{cleveref}
\usepackage{enumitem}
\setlist[enumerate]{label=(\roman*),labelindent=1em,itemsep=0.5em,topsep=0.5em}

\newtheorem{theorem}{Theorem}[section]

\newtheorem{lemma}[theorem]{Lemma}
\theoremstyle{remark}

\allowdisplaybreaks
\begin{document}

\begin{abstract}
    This short paper has two goals.
    First, explaining a simple procedure (which is essentially folklore) that, sometimes, makes it possible to obtain a formula for the number of solutions to a system of multivariate polynomial inequalities over a finite field.
    Second, applying that procedure to prove a formula for the number of contiguous superregular $4 \times 4$ matrices over a finite field.
    The formula was previously conjectured by Appuswamy, Bazzani, Connelly, Ekaireb, Congero, and Zeger [Probability of super-regular matrices and MDS codes over finite fields, arXiv:2603.20983].
    In addition, the same procedure is used to provide formulas for the number of contiguous superregular $3 \times 4$, $3 \times 5$, and $3 \times 6$ matrices over a finite field.
\end{abstract}

\maketitle

\section{Introduction}

A \emph{submatrix} of a matrix $\bm{M}$ is a matrix obtained by deleting possibly some, but not all, of the rows of $\bm{M}$ and possibly some, but not all, of the columns of $\bm{M}$.
A \emph{contiguous submatrix} has the additional property that the nondeleted rows and columns are contiguous to each others.
A matrix is \emph{superregular} if all its square submatrices are invertible, and it is \emph{contiguous superregular} if all its square contiguous submatrices are invertible.
Superregular matrices over finite fields play a significant role in the theory of error-correcting codes.
Indeed, the important \emph{maximum distance separable} (MDS) codes are exactly those codes whose generator matrix can be put in the form $(\bm{I} \mid \bm{S})$, where $\bm{I}$ is an identity matrix and $\bm{S}$ is a superregular matrix over a finite field.
(For an introduction to the theory of error-correcting codes, see the book by MacWilliams and Sloane~\cite{MR465509}.
In particular, the aforementioned result is Theorem~8 of Chapter~11.)

Let $\mathbb{F}_q$ denote a finite field of cardinality $q$.
For each positive integer $k$, let $C_k^{\text{CSR}}(q)$ denote the number of $k \times k$ contiguous superregular matrices over $\mathbb{F}_q$.
Note that $C_1^{\text{CSR}}(q) = q - 1$ and
\begin{equation*}
    C_2^{\text{CSR}}(q) = (q - 1)^3(q - 2) .
\end{equation*}
Appuswamy, Bazzani, Connelly, Ekaireb, Congero, and Zeger~\cite[Theorem 4.4(b)]{Appuswamy2026} proved that
\begin{equation*}
    C_3^{\text{CSR}}(q) = (q - 1)^5 (q - 2) (q - 3) (q^2 - 4q + 5) .
\end{equation*}
Furthermore, they conjectured \cite[Conjecture 4.10]{Appuswamy2026} that
\begin{align}\label{equ:CSR4x4}
    C_4^{\text{CSR}}(q) &= (q - 1)^7 (q - 2) (q - 3) \\
    &\hspace{2em}\cdot (q^7 - 18q^6 + 143q^5 - 654q^4 + 1874q^3 - 3400q^2 + 3671q - 1855) . \nonumber
\end{align}
In particular, they performed an exhaustive computer search that verified Eq.~\eqref{equ:CSR4x4} for all prime powers $q$ up to $67$ \cite[Table~1]{Appuswamy2026}.

The main result of this paper is a proof of Eq.~\eqref{equ:CSR4x4}.
Unfortunately, the proof provides no insight into the structure of contiguous superregular matrices.
Indeed, the core of the proof consists of running a computer implementation of a procedure that tries to determine the number of solutions to certain systems of polynomial inequalities.
The procedure ``tries'' because, in general, it either outputs the correct result or aborts.
Luckily, when given as input some inequalities corresponding to the condition of being a contiguous superregular $4 \times 4$ matrix, the procedure terminates and its output implies Eq.~\eqref{equ:CSR4x4}.
The procedure is quite simple and essentially folklore.
The source code of an implementation in \textsf{SageMath}~\cite{SageMath} is available in a public repository~\cite{Repo}.

The rest of the paper is structured as follows.
Section~\ref{sec:poly-ineqs} introduces the aforementioned problem of determining the number of solutions to certain systems of polynomial inequalities; Section~\ref{sec:procedure} provides a high-level explaination of the procedure to solve it; Section~\ref{sec:CSR4x4} gives the proof of Eq.~\eqref{equ:CSR4x4}; and Section~\ref{sec:CSR3x4-5-6} provides the formulas for the number of contiguous superregular $3 \times 4$, $3 \times 5$, and $3 \times 6$ matrices over a finite field.

\section{Counting solutions to polynomial inequalities}\label{sec:poly-ineqs}

Suppose that a tuple of multivariate polynomials $\bm{f} = f_1, \dots, f_m \in \mathbb{Z}[\bm{x}]$ in the formal variables $\bm{x} = x_1, \dots, x_n$ is given as input.
For every prime power $q$, define
\begin{equation*}
    N(\bm{f} ; q) := \big|\big\{\bm{x} \in \mathbb{F}_q^n : {\textstyle\bigwedge}_{i = 1}^m \big(f_i(\bm{x}) \neq 0 \big) \big\}\big| .
\end{equation*}
The problem is to determine a formula for $N(\bm{f} ; q)$ in terms of $q$ and possibly some simple functions of $q$.
In principle, there is no loss of generality in assuming that $m = 1$, since
\begin{equation}\label{equ:poly-ineqs:1}
    N(\bm{f}; q) = N(f_1 \cdots f_m ; q) .
\end{equation}
However, in practice, using an arbitrary $m$ is convenient because it makes possible to work with the lower-degree polynomials $f_1, \dots, f_m$ rather than the higher-degree product $f_1 \cdots f_m$.
Furthermore, note that the computation of $N(\bm{f}; q)$ is equivalent to the computation of the number of solutions to a polynomial equality.
Indeed, from Eq.~\eqref{equ:poly-ineqs:1} it follows that
\begin{equation*}
    N(\bm{f}; q) := q^n - \big|\big\{\bm{x} \in \mathbb{F}_q^n : f_1(\bm{x}) \cdots f_m(\bm{x}) = 0 \big\} \big|.
\end{equation*}
Thus determining $N(\bm{f}; q)$ is equivalent to counting the points of an algebraic set over $\mathbb{F}_q$, which is a much more studied problem.
However, in the case considered in this paper, working with equalities instead of inequalities does not seem more advantageous.
(Yet another way to convert from inequalities to equalities is the following: introduce auxiliary variables $y_1, \dots, y_m$ and note that $f_i \neq 0$ is equivalent to $f_i y_i = 1$ for a unique $y_i \in \mathbb{F}_q$.)
Anyway, the connection with algebraic sets suggests that there is little hope of providing a general procedure that can find a formula for $N(\bm{f}; q)$ for arbitrary polynomials $\bm{f}$.

\section{Recursive linear substitutions}\label{sec:procedure}

The procedure explained here is recursive and works as follows.

\subsection{Output format}

If successful, the procedure outputs a list of prime numbers $p_1, \dots,p_k$, a list of univariate polynomials $g_1, \dots, g_\ell \in \mathbb{Z}[u]$, and a polynomial in $k+\ell+1$ variables and integer coefficients $F$ satisfying the following property.
Let $s_i(q)$ be equal to $0$ or $1$ if the characteristic of $\mathbb{F}_q$ is equal to $p_i$ or not, respectively; and let $c_i(q)$ be equal to the number of zeros in $\mathbb{F}_q$ of the polynomial $g_i$.
Then
\begin{equation*}
    N(\bm{f}; q) = F\big(q, s_1(q), \dots, s_k(q), c_1(q), \dots, c_\ell(q)\big)
\end{equation*}
for every prime power $q$.

\subsection{Base cases}\label{subsec:base-cases}

Some base cases are easy to handle.

\begin{enumerate}[{label=(\arabic*),labelindent=1em,itemsep=0.5em,topsep=0.5em}]

    \item If one of the polynomials $f_1, \dots, f_m$ is the zero polynomial, then the procedure outputs $F(q) := 0$, since there are no solutions to the inequalities.

    \item If one of the polynomials $f_1, \dots, f_m$ is the constant polynomial $1$ or $-1$, then that polynomial can be discarded.

    \item If $m = 0$, that is, if there are no polynomial inequalities, then the procedure outputs $F(q) = q^n$, since all possible values of the variables $x_1, \dots, x_n$ have to be counted.

    \item If $m = 1$ and $f_1$ is equal to a univariate polynomial, say $f_1 = g(x)$ for some variable $x$ among $x_1, \dots, x_n$, then the procedure adds $g$ to the list of univariate polynomials of the output and outputs the polynomial $F(q, c) := (q - c) q^{n - 1}$, where $c$ is a formal variable associated to the number of zeros in $\mathbb{F}_q$ of $g$.
    Indeed, the inequality $f_1(\bm{x}) \neq 0$ holds for all possible values of the variables $x_1, \dots, x_n$ except those for which $x$ is a zero of $g$.
\end{enumerate}

\subsection{Splitting into irreducibles}

Assume that none of the base cases of Subsection~\ref{subsec:base-cases} occurs.
Then the procedure computes the pairwise distinct irreducible factors of $f_1, \dots, f_m$ over $\mathbb{Z}[\bm{x}]$.
Let these irreducible factors be the prime numbers $p_1, \dots, p_k$ and the nonconstant polynomials $h_1, \dots, h_\ell$.
It follows easily that
\begin{equation*}
    N(\bm{f}; q) = s_1(q) \cdots s_k(q) \cdot N(h_1, \dots, h_\ell; q) ,
\end{equation*}
where $s_i(q)$ is equal to $0$ or $1$ if the characteristic of $\mathbb{F}_q$ is equal to $p_i$ or not, respectively.
Thus the procedure adds $p_1, \dots, p_k$ to the list of prime numbers of the output, and it continues with the computation $N(h_1, \dots, h_\ell; q)$.

\subsection{Linear substitution}

In light of the previous considerations, assume that $f_1, \dots, f_m$ are nonconstant polynomials.
The procedure search for a polynomial $f$ among $f_1, \dots, f_m$ and a variable $x$ among $x_1, \dots, x_n$ such that $f$ is linear in $x$, that is, there exist integer-coefficient polynomials $a$ and $b$, not depending on $x$, such that $f = ax + b$.
If no such polynomial $f$ and variable $x$ are found, then the procedure aborts.
Otherwise, up to reordering, suppose that $x = x_1$ and $f = f_1$.
Also, write $\bm{y} = x_2, \dots, x_n$, and define
\begin{equation*}
    \widetilde{f}_i(\bm{y}) := f_i\!\left(-\frac{b(\bm{y})}{a(\bm{y})}, \bm{y}\right) \cdot \big(a(\bm{y})\big)^{\deg_x(f_i)}
\end{equation*}
for $i = 2, \dots, m$.
Note that $\widetilde{f}_i \in \mathbb{Z}[\bm{y}]$.
The procedure continues recursively according to the following lemma.

\begin{lemma}\label{lem:recursion}
    With the notation of this subsection, the recursive formula
    \begin{align}\label{equ:recursion}
        N(\bm{f}; q) &= N(a, f_2, \dots, f_m; q)
        + N(b, f_2, \dots, f_m; q) \\
        &\hspace{2em}- N(a, b, f_2, \dots, f_m; q)
        - N(a, \widetilde{f}_2, \dots, \widetilde{f}_m; q) \nonumber
    \end{align}
    holds.
\end{lemma}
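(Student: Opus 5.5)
The plan is to partition the solution set according to whether $a$ vanishes, and to count each part separately. Throughout, $N(a, \widetilde{f}_2, \dots, \widetilde{f}_m; q)$ is understood as a count over $\bm{y} \in \mathbb{F}_q^{n-1}$, since none of these polynomials involves $x_1$. All other terms count points of $\mathbb{F}_q^n$. Integer-coefficient polynomials are evaluated on $\mathbb{F}_q$ via reduction modulo the characteristic, and this commutes with substitution and multiplication.

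Let $S$ be the set of $\bm{x} \in \mathbb{F}_q^n$ with $f_i(\bm{x}) \neq 0$ for all $i = 2, \dots, m$. Then $N(\bm{f}; q)$ is the number of $\bm{x} \in S$ with $a(\bm{y}) x_1 + b(\bm{y}) \neq 0$. Split $S$ as $S_0 \cup S_1$, where $S_0$ is the subset with $a(\bm{y}) = 0$ and $S_1$ the subset with $a(\bm{y}) \neq 0$.

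On $S_0$, the condition $f_1 \neq 0$ reduces to $b(\bm{y}) \neq 0$. So the contribution is the number of $\bm{x} \in S$ with $b \neq 0$ and $a = 0$. This equals the number with $b \neq 0$ minus the number with $a \neq 0$ and $b \neq 0$, that is,
\begin{equation*}
N(b, f_2, \dots, f_m; q) - N(a, b, f_2, \dots, f_m; q).
\end{equation*}

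On $S_1$, for fixed $\bm{y}$ we have $f_1 = 0$ exactly when $x_1 = -b(\bm{y})/a(\bm{y})$. So the contribution is $|S_1| = N(a, f_2, \dots, f_m; q)$ minus the number $T$ of points $\bm{x} \in S_1$ with $x_1 = -b(\bm{y})/a(\bm{y})$. The map $\bm{x} \mapsto \bm{y}$ is a bijection from this set of points onto the set of $\bm{y} \in \mathbb{F}_q^{n-1}$ satisfying both $a(\bm{y}) \neq 0$ and $f_i(-b(\bm{y})/a(\bm{y}), \bm{y}) \neq 0$ for $i \geq 2$.

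Since $a(\bm{y}) \neq 0$, multiplying by $a(\bm{y})^{\deg_x(f_i)}$ does not change whether a value vanishes. Hence the second condition is equivalent to $\widetilde{f}_i(\bm{y}) \neq 0$, which gives
\begin{equation*}
T = N(a, \widetilde{f}_2, \dots, \widetilde{f}_m; q).
\end{equation*}
Adding the contributions of $S_0$ and $S_1$ yields Eq.~\eqref{equ:recursion}.

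The only delicate point is checking that $\widetilde{f}_i \in \mathbb{Z}[\bm{y}]$ and that evaluating it over $\mathbb{F}_q$ agrees with first substituting $x_1 = -b/a$ in $\mathbb{F}_q$ and then multiplying. To see this, write $f_i = \sum_{j \leq d} c_j(\bm{y}) x_1^j$ with $d = \deg_x(f_i)$. Then
\begin{equation*}
\widetilde{f}_i = \sum_{j \leq d} c_j(\bm{y})\,(-b(\bm{y}))^j\, a(\bm{y})^{d-j},
\end{equation*}
which is a polynomial identity over $\mathbb{Z}$. It therefore survives reduction modulo $p$ and evaluation at any $\bm{y}$ with $a(\bm{y}) \neq 0$.
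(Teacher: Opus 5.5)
Your proof is correct and rests on the same idea as the paper's proof: split according to whether $a(\bm{y})$ vanishes, and identify the points with $a(\bm{y}) \neq 0$ and $x_1 = -b(\bm{y})/a(\bm{y})$ with the $\bm{y}$ counted by $N(a, \widetilde{f}_2, \dots, \widetilde{f}_m; q)$. The only difference is bookkeeping. The paper subtracts the zeros of $f$ from $N(f_2, \dots, f_m; q)$ and handles the $a = b = 0$ part by a four-term inclusion--exclusion in which that term cancels, whereas you count the $a = 0$ part directly; your remark that the last term is a count over $\mathbb{F}_q^{n-1}$ also makes explicit what the paper leaves implicit.
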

\begin{proof}
    Let $(x, \bm{y}) \in \mathbb{F}_q^n$.
    Note that $f(x, \bm{y}) = 0$ if and only if $a(\bm{y}) = 0$ and $b(\bm{y}) = 0$, or $a(\bm{y}) \neq 0$ and $x = - b(\bm{y}) / a(\bm{y})$.
    Moreover, in the latter case, for each $i \in \{2, \dots, n\}$ the inequality $f_i(x, \bm{y}) \neq 0$ is equivalent to $\widetilde{f}_i(\bm{y}) \neq 0$.
    Therefore
    \begin{align}\label{lem:recursion:equ:1}
        N(\bm{f}; q) &= N(f_2, \dots, f_m; q) - \big|\big\{(x, \bm{y}) \in \mathbb{F}_q^n : f(x, \bm{y}) = 0 \land {\textstyle\bigwedge}_{i = 2}^m \big(f_i(x, \bm{y}) \neq 0\big) \big\}\big| \\
        &= N(f_2, \dots, f_m; q) - \big|\big\{(x, \bm{y}) \in \mathbb{F}_q^n : a(\bm{y}) = 0 \land b(\bm{y}) = 0 \land {\textstyle\bigwedge}_{i = 2}^m \big(f_i(x, \bm{y}) \neq 0\big) \big\}\big| \nonumber \\
        &\phantom{==} - N(a, \widetilde{f}_2, \dots, \widetilde{f}_m; q) . \nonumber
    \end{align}
    Furthermore, by inclusion-exclusion, it follows that
    \begin{align}\label{lem:recursion:equ:2}
        &\big|\big\{(x, \bm{y}) \in \mathbb{F}_q^n : a(\bm{y}) = 0 \land b(\bm{y}) = 0 \land {\textstyle\bigwedge}_{i = 2}^m \big(f_i(x, \bm{y}) \neq 0\big) \big\}\big| \\
        &\phantom{==}= N(f_2, \dots, f_m; q)
        - N(a, f_2, \dots, f_m; q)
        - N(b, f_2, \dots, f_m; q)
        + N(a, b, f_2, \dots, f_m; q) . \nonumber
    \end{align}
    Combining Eqs.~\eqref{lem:recursion:equ:1} and \eqref{lem:recursion:equ:2} yields Eq.~\eqref{equ:recursion}.
\end{proof}

For the sake of simplicity, the procedure aborts if recursion depth exceed a fixed parameter; so no analysis on recursion termination is needed.

\subsection{Further considerations}

This section only provides a high-level explanation, which omits several conceptually easy, but technical, optimizations/improvements.
For instance, when searching for the polynomial with a linear variable, it is better to proceed from lower- to higher-degree polynomials.
Also, during the recursive computation, the same system of inequalities may appear several times. Hence, employing memoization techniques speeds up the computation.
Furthermore, factorizing the polynomials $f_1, \dots, f_m$ into irreducible factors at each occasion might not be the most efficient strategy: a lazy approach might be faster.
Finally, if the procedure aborts, then a backtracking method makes it possible to try a different linear substitution, which perhaps leads to a successful computation.

\section{Contiguous superregular \texorpdfstring{$4 \times 4$}{4x4} matrices}\label{sec:CSR4x4}

Let $\bm{A} = (a_{i,j})$ be a contiguous superregular $4 \times 4$ matrix over $\mathbb{F}_q$.
Clearly, all entries of $\bm{A}$ are nonzero.
Also, if a row or a column of $\bm{A}$ is multiplied by a nonzero scalar, then the resulting matrix is still contiguous superregular.
Hence, starting from $\bm{A}$ and multiplying the $i$th row by $a_{i, 2}^{-1}$ for $i=1,2,3,4$ and the $j$th column by $a_{2, 2} a_{2, j}^{-1}$ for $j=1,3,4$ (in this order) produces a contiguous superregular matrix over $\mathbb{F}_q$ of the form
\begin{equation}\label{equ:normalized-matrix}
    \begin{pmatrix}
        b_{1, 1} & 1 & b_{1, 2} & b_{1, 3} \\
        1 & 1 &        1 &        1 \\
        b_{2, 1} & 1 & b_{2, 2} & b_{2, 3} \\
        b_{3, 1} & 1 & b_{3, 2} & b_{3, 3}
    \end{pmatrix} .
\end{equation}
In fact, this normalization method is a bijection.
Thus
\begin{equation}\label{equ:NCSR4x4:1}
    C_4^{\text{CSR}}(q) = (q - 1)^7 \, C_4^{\text{NCSR}}(q) ,
\end{equation}
where $C_4^{\text{NCSR}}(q)$ is the number of contiguous superregular $4 \times 4$ matrix over $\mathbb{F}_q$ of the form \eqref{equ:normalized-matrix}.

Appuswamy et al.~\cite[Lemma~4.1]{Appuswamy2026} employed a similar normalization method, but their normalized matrices have all $1$s on the first row and first column.
The normalization \eqref{equ:normalized-matrix} has the advantage that the determinants of all contiguous $2 \times 2$ submatrices are first-degree polynomials with the only exception of the determinant of the bottom-right $2 \times 2$ submatrix, which is a second-degree polynomial.

For every positive integer $k$, let $\bm{1}_k$ denote the $k \times k$ matrix with all entries equal to $1$.
If $\bm{M}$ is a $k \times k$ matrix over some field then
\begin{equation}\label{equ:1-minus-determinant}
    \left|
    \begin{array}{cccc}
        1 & 1 & \cdots & 1 \\
        1 & & & \\
        \vdots & & \bm{M} & \\[5pt]
        1 & & &
    \end{array}
    \right|
    = \left|
    \begin{array}{cccc}
        1 & 1 & \cdots & 1 \\
        0 & & & \\
        \vdots & & \bm{M} - \bm{1}_k & \\[5pt]
        0 & & &
    \end{array}
    \right|
    = (-1)^k \, |\bm{1}_k - \bm{M}| .
\end{equation}
Let $c_{i,j} := 1 - b_{i,j}$ for all $i, j \in \{1,2,3\}$.
Computing the determinants of the square contiguous submatrices of~\eqref{equ:normalized-matrix} with the help of identity~\eqref{equ:1-minus-determinant}, eventually after having swapped some rows and columns, yields that a matrix of the form~\eqref{equ:normalized-matrix} is contiguous superregular if and only if
\begin{gather}\label{equ:c-inequalities}
    c_{i,j} \neq 1 \quad \forall i,j \in \{1,2,3\} , \\
    c_{1,1} \neq 0, \quad c_{1, 2} \neq 0, \quad c_{2,1} \neq 0, \quad c_{2,2} \neq 0, \nonumber\\
    c_{1,2} \neq c_{1, 3}, \quad c_{2, 1} \neq c_{3, 1}, \quad c_{2, 2} \neq c_{2, 3}, \quad c_{2, 2} \neq c_{3, 2}, \nonumber\\
    \left|\begin{matrix}
        1 - c_{2,2} & 1 - c_{2,3} \nonumber\\
        1 - c_{3,2} & 1 - c_{3,3} \nonumber\\
    \end{matrix}\right| \neq 0 , \nonumber\\
    \left|\begin{matrix}
        c_{1,1} & c_{1,2} \\
        c_{2,1} & c_{2,2} \\
    \end{matrix}\right| \neq 0, \quad
    \left|\begin{matrix}
        c_{1,2} & c_{1,3} \\
        c_{2,2} & c_{2,3} \\
    \end{matrix}\right| \neq 0, \quad
    \left|\begin{matrix}
        c_{2,1} & c_{2,2} \\
        c_{3,1} & c_{3,2} \\
    \end{matrix}\right| \neq 0, \quad
    \left|\begin{matrix}
        c_{2,2} & c_{2,3} \\
        c_{3,2} & c_{3,3} \\
    \end{matrix}\right| \neq 0, \nonumber\\
    \left|\begin{matrix}
        c_{1,1} & c_{1,2} & c_{1,3} \\
        c_{2,2} & c_{2,2} & c_{2,3} \\
        c_{3,2} & c_{3,2} & c_{3,3} \\
    \end{matrix}\right| \neq 0 . \nonumber
\end{gather}
The degrees of inequalities \eqref{equ:c-inequalities} are still too high to applying successfully the procedure of Section~\ref{sec:procedure}.
Let
\begin{align*}
    x_{1,1} &:= c_{1,2} c_{2,1} / (c_{1,1} c_{2,2}), &
    x_{1,2} &:= 1 / c_{1,2}, &
    x_{1,3} &:= c_{1,3} / c_{1,2}, \\
    x_{2,1} &:= 1 / c_{2,1}, &
    x_{2,2} &:= 1 / c_{2,2}, &
    x_{2,3} &:= c_{2,3} / c_{2,2}, \\
    x_{3,1} &:= c_{3,1} / c_{2,1}, &
    x_{3,2} &:= c_{3,2} / c_{2,2}, &
    x_{3,3} &:= c_{3,3} / c_{2,2},
\end{align*}
so that
\begin{align*}
    c_{1,1} &:= x_{2,2} / (x_{1,1} x_{1,2} x_{2,1}), &
    c_{1,2} &:= 1 / x_{1,2}, &
    c_{1,3} &:= x_{1,3} / x_{1,2}, \\
    c_{2,1} &:= 1 / x_{2,1}, &
    c_{2,2} &:= 1 / x_{2,2}, &
    c_{2,3} &:= x_{2,3} / x_{2,2}, \\
    c_{3,1} &:= x_{3,1} / x_{2,1}, &
    c_{3,2} &:= x_{3,2} / x_{2,2}, &
    c_{3,3} &:= x_{3,3} / x_{2,2} .
\end{align*}
Hence $x_{i,j} \leftrightarrow c_{i,j}$ is a bijection over the domain on which inequalities~\eqref{equ:c-inequalities} hold.
After this change of variables, inequalities \eqref{equ:c-inequalities} are equivalent to
\begin{gather}\label{equ:final-system}
    x_{1,1} \neq 0, \quad
    x_{1,2} \neq 0, \quad
    x_{2,1} \neq 0, \quad
    x_{2,2} \neq 0, \\
    x_{1,1} \neq 1, \quad
    x_{1,2} \neq 1, \quad
    x_{2,1} \neq 1, \quad
    x_{2,2} \neq 1, \nonumber\\
    x_{1,3} \neq 1, \quad
    x_{3,1} \neq 1, \quad
    x_{2,3} \neq 1, \quad
    x_{3,2} \neq 1, \nonumber\\
    x_{1,2} - x_{1,3} \neq 0, \quad
    x_{2,1} - x_{3,1} \neq 0, \nonumber\\
    x_{1,3} - x_{2,3} \neq 0, \quad
    x_{3,1} - x_{3,2} \neq 0, \nonumber\\
    x_{2,2} - x_{2,3} \neq 0, \quad
    x_{2,2} - x_{3,2} \neq 0, \quad
    x_{2,2} - x_{3,3} \neq 0, \nonumber\\
    x_{2,3} x_{3,2} - x_{3,3} \neq 0, \nonumber\\
    x_{1,1} x_{1,2} x_{2,1} - x_{2,2} \neq 0, \nonumber\\
    (x_{2,2} - 1)(x_{2,2} - x_{3,3}) - (x_{2,2} - x_{2,3})(x_{2,2} - x_{3,2}) \neq 0, \nonumber\\
    (x_{1,1} - 1)(x_{2,3} x_{3,2} - x_{3,3}) - x_{1,1}(x_{1,3} - x_{2,3})(x_{3,1} - x_{3,2}) \neq 0 . \nonumber
\end{gather}
Running the implementation of the procedure of Section~\ref{sec:procedure} with input the system of polynomial inequalities \eqref{equ:final-system} yields as output
\begin{align}\label{equ:output}
    p_1 &= 2 , \\
    g_1 &= u^2 - u + 1 , \nonumber \\
    F &= (q - 2)(q - 3)(q^7 - 18q^6 + 143q^5 - 654q^4 + 1874q^3 - 3400q^2 + 3671q - 1855) . \nonumber
\end{align}
(For the source code, see the file \textsf{examples/NCSR\_4x4\_matrices.sage} in the repository~\cite{Repo}.)

Note that, despite the computation involving the prime number $p_1$ and the univariate polynomial $g_1$, the polynomial $F$ in the output \eqref{equ:output} depends only on $q$ (the terms involving the variables associated with $p_1$ and $g_1$ cancel out).
Hence
\begin{align}\label{equ:NCSR4x4:2}
    C_4^{\text{NCSR}}(q) &= (q - 2)(q - 3) \\
    &\phantom{1.5em} \cdot (q^7 - 18q^6 + 143q^5 - 654q^4 + 1874q^3 - 3400q^2 + 3671q - 1855) . \nonumber
\end{align}
Combining Eqs.~\eqref{equ:NCSR4x4:1} and \eqref{equ:NCSR4x4:2} yields Eq.~\eqref{equ:CSR4x4}, as desired.

\section{Contiguous superregular \texorpdfstring{$3 \times 4$, $3 \times 5$, and $3 \times 6$}{3x4, 3x5, and 3x6} matrices}\label{sec:CSR3x4-5-6}

For all positive integers $k$ and $\ell$, let $C_{k \times l}^{\textsf{CSR}}(q)$ denote the number of contiguous superregular $k \times \ell$ matrices over $\mathbb{F}_q$.
After the same normalization explained in Section~\ref{equ:CSR4x4}, that is, normalizing the matrix so that the second row and the second column have all entries equal to $1$, a direct application of the procedure of Section~\ref{sec:procedure} provides the following formulas.

\begin{samepage}
\begin{align*}
    C_{3 \times 4}^{\textsf{CSR}}(q) &= (q - 1)^6 (q - 2) (q - 3) (q^4 - 9q^3 + 32q^2 - 54q + 37) , \\
    C_{3 \times 5}^{\textsf{CSR}}(q) &= (q - 1)^7 (q - 2) (q - 3)^2 (q^3 - 6q^2 + 14q - 13)(q^2 - 5q + 7) , \\
    C_{3 \times 6}^{\textsf{CSR}}(q) &= (q - 1)^8 (q - 2) (q - 3)^2 \\
    &\hspace{2em}\cdot (q^7 - 16q^6 + 113q^5 - 458q^4 + 1154q^3 - 1813q^2 + 1649q - 671) .
\end{align*}
\end{samepage}
(For the source code, see the files \textsf{examples/NCSR\_3x*\_matrices.sage} in the repository~\cite{Repo}.)

\bibliographystyle{amsplain-no-bysame}
\bibliography{temp}

\end{document}